
\documentclass[a4paper,12pt]{amsart}

\oddsidemargin-6mm
\evensidemargin-6mm
\topmargin-8mm
\textwidth17cm
\textheight25cm

\usepackage[english]{babel} 
\usepackage{amsmath,amsthm,amsfonts,amssymb,units}
\usepackage{xcolor,tikz,tikz-3dplot,subcaption}


\newtheorem{thm}{Theorem}

\newtheorem{cor}[thm]{Corollary}
\newtheorem{lem}[thm]{Lemma}
\theoremstyle{definition}
\newtheorem{rem}[thm]{Remark}

\newtheorem{exam}[thm]{Example}


\newcommand{\brac}[1]{\left(#1\right)}

\def\Om{\Omega}
\def\om{\omega}
\def\p{\partial}
\def\reals{\mathbb{R}}
\def\di{\,{\rm d}}
\def\phih{\hat{\phi}}

\def\cf{c_{\rm f}}
\def\cp{c_{\rm p}}
\def\cm{c_{\rm m,1}}
\def\cmm{c_{\rm m,2}}
\def\cmt{c_{\rm m,t}}
\def\cmn{c_{\rm m,n}}
\newcommand{\cpw}[1]{c_{{\rm p},#1}}
\def\ol{\overline}

\newcommand{\set}[2]{\{#1\,\mid\,#2\}}
\newcommand{\bset}[2]{\bigg\{#1\,\bigg|\,#2\bigg\}}

\def\na{\nabla}
\DeclareMathOperator{\opdiv}{div}
\def\div{\opdiv}
\DeclareMathOperator{\rot}{rot}
\DeclareMathOperator{\diam}{diam}

\DeclareMathOperator{\cont}{\sf C}
\DeclareMathOperator{\lebesgue}{\sf L}
\DeclareMathOperator{\hilbert}{\sf H}
\DeclareMathOperator{\divergence}{\sf D}
\DeclareMathOperator{\rotation}{\sf R}

\def\DF{\mathcal H_{\rm D}}
\def\NF{\mathcal H_{\rm N}}

\def\cic{\mathring{\cont}{}^\infty}

\def\lo{\lebesgue^1}					
\def\lt{\lebesgue^2}					
\def\ltz{\lt_0}

\def\ho{{\hilbert^1}}				
\def\hoc{\mathring{\hilbert}{}^1}

\def\d{\divergence}					
\def\dz{\d_0}
\def\dc{\mathring{\d}}
\def\dcz{\dc_0}

\def\r{\rotation}					
\def\rz{\r_0}
\def\rc{\mathring{\r}}
\def\rcz{\rc_0}

\newcommand{\norm}[1]{|#1|}
\newcommand{\normlt}[1]{\norm{#1}_{\lt}}

\newcommand{\scp}[2]{\langle#1,#2\rangle}
\newcommand{\scplt}[2]{\scp{#1}{#2}_{\lt}}


\begin{document}

\title[\sc On Helmholtz Decompositions for Bounded Domains in $\reals^3$]
{\Large\sf A Short Note on Helmholtz Decompositions for Bounded Domains in $\reals^3$}

\author{Immanuel Anjam}
\address{Taitoniekantie 9 F as. 401\\40740 Jyv\"askyl\"a\\Finland}
\email{immanuel.anjam@gmail.com}

\keywords{Helmholtz decomposition, bounded domain, zero mean, Maxwell constant}
\subjclass[2010]{35A23, 35Q61}
\date{13 September 2018}

\begin{abstract}
In this short note we consider several widely used $\lt$-orthogonal Helmholtz decompositions for bounded domains in $\reals^3$. It is well known that one part of the decompositions is a subspace of the space of functions with zero mean. We refine this global property into a local equivalent: we show that functions from these spaces have zero mean in every part of specific decompositions of the domain.

An application of the zero mean properties is presented for convex domains. We introduce a specialized Poincar\'e-type inequality, and estimate the related unknown constant from above. The upper bound is derived using the upper bound for the Poincar\'e constant proven by Payne and Weinberger. This is then used to obtain a small improvement of upper bounds of two Maxwell-type constants originally proven by Pauly.

Although the two dimensional case is not considered, all derived results can be repeated in $\reals^2$ by similar calculations.
\end{abstract}

\maketitle
\tableofcontents

\vskip+6em
\begin{verbatim}
This short note was sent to Mathematica Scandinavica on 21 December 2017, and
was accepted on 24 April 2018. Estimated publication date is in 2020.
\end{verbatim}

\newpage


\section{Notation and Helmholtz Decompositions}

Let $\om\subset\reals^3$ be a bounded open set. The space of scalar- or vector-valued smooth functions with compact supports in $\om$ is denoted by $\cic(\om)$. We denote by $\norm{\,\cdot\,}_{\lo(\om)}$ the norm for functions in $\lo(\om)$, and by $\scp{\,\cdot\,}{\,\cdot\,}_{\lt(\om)}$ and $\norm{\,\cdot\,}_{\lt(\om)}$ the inner product and norm for functions in $\lt(\om)$. The space of scalar-valued functions in $\lt(\om)$ with zero mean is defined as
\begin{equation*}
	\ltz(\om) := \bset{\varphi\in\lt(\om)}{ \int_\om \varphi \di x = 0 } ,
\end{equation*}
and as usual, for a vector-valued function $\phi$ we write $\phi\in\ltz(\om)$ if all its components belong to $\ltz(\om)$.

Throughout this note $\Om$ denotes a bounded domain in $\reals^3$, and from now on, whenever $\om=\Om$, we sometimes omit the indication of the set in our notation.

Aside from the gradient $\na$ we will also need the divergence operator $\div$ and the rotation operator $\rot$ acting on vector-valued functions. For smooth functions they are defined as
\begin{equation*}
	\div \begin{pmatrix} \phi_1 \\ \phi_2 \\ \phi_3 \end{pmatrix}
	:= \p_1 \phi_1 + \p_2 \phi_2 + \p_3 \phi_3 ,
	\qquad
	\rot \begin{pmatrix} \phi_1 \\ \phi_2 \\ \phi_3 \end{pmatrix}
	:= \begin{pmatrix}
	\p_2 \phi_3 - \p_3 \phi_2 \\
	\p_3 \phi_1 - \p_1 \phi_3 \\
	\p_1 \phi_2 - \p_2 \phi_1
	\end{pmatrix} .
\end{equation*}
We define the usual Sobolev spaces
\begin{align*}
	\ho	& := \set{\varphi\in\lt}{\na\varphi\in\lt} ,
		& \hoc & := \ol{\cic}^{\ho} , \\
	\d	& := \set{\phi\in\lt}{\div\phi\in\lt} ,
		& \dc & := \ol{\cic}^{\d} , \\
	\r	& := \set{\phi\in\lt}{\rot\phi\in\lt} ,
		& \rc & := \ol{\cic}^{\r} ,
\end{align*}
which are Hilbert spaces. Note that on the former spaces the differential operators are now defined in the usual weak sense. The latter spaces, where the closures are taken with respect to graph norms, generalize the classical homogenous scalar, normal, and tangential boundary conditions, respectively. The operators satisfy
\begin{align*}
	& \forall \varphi\in\hoc & & \forall \phi\in\d
	& & & \scplt{\na\varphi}{\phi} & = - \scplt{\varphi}{\div\phi} , \\
	& \forall \varphi\in\ho & & \forall \phi\in\dc
	& & & \scplt{\na\varphi}{\phi} & = - \scplt{\varphi}{\div\phi} , \\
	& \forall \phi\in\rc & & \forall \psi\in\r
	& & & \scplt{\rot\phi}{\psi} & = \scplt{\phi}{\rot\psi} .
\end{align*}
Note, that even though it is not indicated in the notation, we have two of each differential operator, one acting on a space without a boundary condition, and one acting on a space with a boundary condition. We also define
\begin{align*}
	\dz		& := \set{\phi\in\d}{\div\phi=0} ,
	& \dcz 	& := \set{\phi\in\dc}{\div\phi=0} , \\
	\rz		& := \set{\phi\in\r}{\rot\phi=0} ,
	& \rcz 	& := \set{\phi\in\rc}{\rot\phi=0} .
\end{align*}
By the projection theorem we obtain the $\lt$-orthogonal Helmholtz decompositions
\begin{align}
	\lt & = \ol{\na\hoc} \oplus \dz
			= \rcz \oplus \ol{\rot\r}
			= \ol{\na\hoc} \oplus \DF \oplus \ol{\rot\r} ,
		& \DF & := \dz \cap \; \rcz, \label{eq:hd1} \\
	\lt & = \ol{\na\ho} \oplus \dcz
			= \rz \oplus \, \ol{\rot\rc}
			= \ol{\na\ho} \oplus \NF \oplus \ol{\rot\rc} ,
		& \NF & := \dcz \cap \rz , \label{eq:hd2}
\end{align}
where $\DF$ and $\NF$ are the spaces of Dirichlet and Neumann fields, respectively. In particular, we have the decompositions
\begin{equation} \label{eq:hd3}
	\dcz = \NF \oplus \ol{\rot\rc} ,
	\qquad
	\rcz = \ol{\na\hoc} \oplus \DF .
\end{equation}

It is easy to see that functions from $\dcz$ have zero mean globally, i.e., they belong to $\ltz$:
\begin{equation} \label{eq:Dglobal}
	\forall \phi\in\dcz \qquad
	\int_\Om \phi_i \di x = \scplt{\phi}{\na x_i} = -\scplt{\div\phi}{x_i} = 0 .
\end{equation}
Similarly we see that $\rcz \subset \ltz$: for $v_1(x):=(0,0,x_2)$, $v_2(x):=(x_3,0,0)$, and $v_3(x):=(0,x_1,0)$ we have
\begin{equation*}
	\forall \phi\in\rcz \qquad
	\int_\Om \phi_i \di x = \scplt{\phi}{\rot v_i} = \scplt{\rot\phi}{v_i} = 0 .
\end{equation*}
In this note we show that functions from the above two spaces satisfy local zero mean properties with respect to certain decompositions of $\Om$.

For our considerations, it is not needed to assume any regularity of the domain. However, we mention that if $\Om$ is Lipschitz, then Rellich's selection theorem and Weck's selection theorem \cite{weck} hold. This means that the closure bars in \eqref{eq:hd1}--\eqref{eq:hd3} can be skipped, and both $\DF$ and $\NF$ are finite dimensional. Furthermore, if the domain is topologically equivalent to a ball, then $\DF=\NF=\{0\}$. For more information on Helmholtz decompositions we refer to \cite{leisbook} and \cite{paulyfirstorder}, which contains a concise exposition of Helmholtz decompositions in a general Hilbert space setting.

This note is organized as follows. Section \ref{sec:DD} contains additional notation related to decompositions of the domain. Our main results, Theorems \ref{thm:D} and \ref{thm:R}, and the local zero mean properties of Corollaries \ref{cor:D} and \ref{cor:R}, are in Section \ref{sec:ZM}. In Section \ref{sec:C} we use these results to derive, in the case of convex domains, slightly improved upper bounds of certain Maxwell-type constants related to the theory of electromagnetism.


\section{Decompositions of the Domain}
\label{sec:DD}

Our calculations are invariant with respect to translations of the domain, so without loss of generality we assume $\Om$ to be contained in the rectangular cuboid
\begin{equation*}
	I:=(0,l_1)\times(0,l_2)\times(0,l_3), \qquad 0<l_1,l_2,l_3<\infty .
\end{equation*}
We assume $\Om$ is translated such that $I$ is as small as possible. Note that the calculations of the following section no longer hold if the domain is rotated.

In what follows we will often need two or three distinct indices from the index set $\{1,2,3\}$. To this end, we define $\{1,2,3\}_p$ to denote the set of all $p$-permutations of the set $\{1,2,3\}$, where $p$ is either $2$ or $3$.

For $0 \leq \alpha_i < \beta_i \leq l_i$, $i\in\{1,2,3\}$, we define
\begin{align*}
	I_i & := \set{x \in I}{\alpha_i < x_i < \beta_i} ,
	& I_{ij} & := I_i \cap I_j , \\
	\qquad
	\Om_i & := \set{x \in \Om}{\alpha_i < x_i < \beta_i} ,
	& \Om_{ij} & := \Om_i \cap \Om_j ,
\end{align*}
where in the latter definitions $(i,j)\in\{1,2,3\}_2$. Note that $\Om_i \subset I_i$ and $\Om_{ij} \subset I_{ij}$ hold. Examples of these subsets are illustrated in Figure \ref{fig:cuts}. It is clear that $\Om$ can be decomposed in such pieces in a way that the pieces are nonintersecting, and that the union of their closures equals $\ol{\Om}$. Note also that if $\Om_i$ and $\Om_{ij}$ appear in the same relation, they are always related to each other, i.e., in particular $\Om_{ij}\subset\Om_i$ holds.

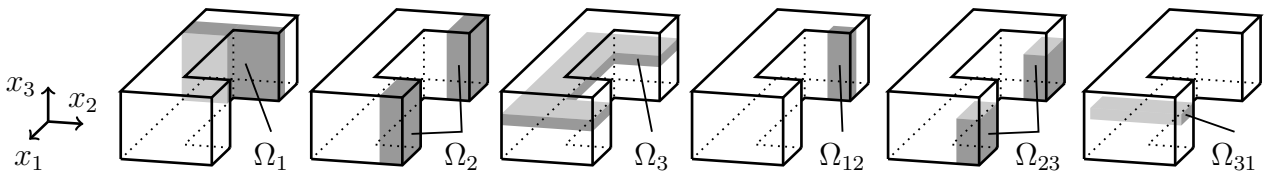
\begin{figure} [!b]
\centering
\tdplotsetmaincoords{75}{105}
\begin{tikzpicture}[scale=0.32,tdplot_main_coords]
	\draw [black,line width=1pt,->] (0,-3,2) -- (3,-3,2) node[anchor=north]{$x_1$};
	\draw [black,line width=1pt,->] (0,-3,2) -- (0,-1.5,2) node[anchor=south]{$x_2$};
	\draw [black,line width=1pt,->] (0,-3,2) -- (0,-3,3.5) node[anchor=east]{$x_3$};
\end{tikzpicture}
\begin{tikzpicture}[scale=0.31,tdplot_main_coords]
	\def\lone{7}
	\def\lthree{3}

	\fill [black,opacity=.4] (-3,0,\lthree) -- (-3,2,\lthree) -- (-5,2,\lthree) -- (-5,0,\lthree) ;
	\fill [black,opacity=.4] (-4,2,\lthree) -- (-4,4,\lthree) -- (-5,4,\lthree) -- (-5,2,\lthree) ;
	\fill [black,opacity=.4] (-3,2,\lthree) -- (-4,2,\lthree) -- (-4,2,0) -- (-3,2,0);
	\fill [black,opacity=.4] (-4,4,\lthree) -- (-5,4,\lthree) -- (-5,4,0) -- (-4,4,0);
	\fill [black,opacity=.2] (-3,2,\lthree) -- (-3,2,0) -- (-3,0,0) -- (-3,0,\lthree);
	\fill [black,opacity=.4] (-4,4,\lthree) -- (-4,4,0) -- (-4,2,0) -- (-4,2,\lthree);

	\draw [black,line width=1pt] (-\lone,0,\lthree) -- (\lone,0,\lthree) -- (\lone,4,\lthree) -- (\lone-3,4,\lthree) -- (\lone-3,2,\lthree) -- (-\lone+3,2,\lthree) -- (-\lone+3,4,\lthree) -- (-\lone,4,\lthree) -- cycle;
	
	\draw [black,line width=1pt] (\lone,0,\lthree) -- (\lone,0,0);
	\draw [black,line width=1pt] (\lone,4,\lthree) -- (\lone,4,0);
	\draw [black,line width=1pt] (\lone-3,4,\lthree) -- (\lone-3,4,0);
	\draw [black,line width=1pt] (-\lone+3,4,\lthree) -- (-\lone+3,4,0);
	\draw [black,line width=1pt] (-\lone,4,\lthree) -- (-\lone,4,0);
	
	\draw [black,line width=1pt] (\lone,0,0) -- (\lone,4,0) -- (\lone-3,4,0);
	\draw [black,line width=1pt] (-\lone,4,0) -- (-\lone+3,4,0) -- (-\lone+3,2,0) -- (-\lone+3.5,2,0);
	
	\draw [black,line width=0.7pt,dotted] (-\lone+3,2,\lthree) -- (-\lone+3,2,0);
	\draw [black,line width=0.7pt,dotted] (-\lone,4,0) -- (-\lone,0,0) -- (\lone,0,0);
	\draw [black,line width=0.7pt,dotted] (\lone-3,4,0) -- (\lone-3,2,0) -- (-3,2,0);
	
	\draw [black,line width=0.7pt] (-6,2,1) -- (1,5,0) node[anchor=north]{$\Om_1$};
\end{tikzpicture}
\begin{tikzpicture}[scale=0.31,tdplot_main_coords]
	\def\lone{7}
	\def\lthree{3}

	\fill [black,opacity=.4] (-4,3,\lthree) -- (-4,4,\lthree) -- (-7,4,\lthree) -- (-7,3,\lthree);
	\fill [black,opacity=.4] (-4,4,\lthree) -- (-7,4,\lthree) -- (-7,4,0) -- (-4,4,0);
	\fill [black,opacity=.4] (-4,4,\lthree) -- (-4,4,0) -- (-4,3,0) -- (-4,3,\lthree);
	
	\fill [black,opacity=.4] (4,3,\lthree) -- (4,4,\lthree) -- (7,4,\lthree) -- (7,3,\lthree);
	\fill [black,opacity=.4] (4,4,\lthree) -- (7,4,\lthree) -- (7,4,0) -- (4,4,0);
	\fill [black,opacity=.4] (7,4,\lthree) -- (7,4,0) -- (7,3,0) -- (7,3,\lthree);

	\draw [black,line width=1pt] (-\lone,0,\lthree) -- (\lone,0,\lthree) -- (\lone,4,\lthree) -- (\lone-3,4,\lthree) -- (\lone-3,2,\lthree) -- (-\lone+3,2,\lthree) -- (-\lone+3,4,\lthree) -- (-\lone,4,\lthree) -- cycle;
	
	\draw [black,line width=1pt] (\lone,0,\lthree) -- (\lone,0,0);
	\draw [black,line width=1pt] (\lone,4,\lthree) -- (\lone,4,0);
	\draw [black,line width=1pt] (\lone-3,4,\lthree) -- (\lone-3,4,0);
	\draw [black,line width=1pt] (-\lone+3,4,\lthree) -- (-\lone+3,4,0);
	\draw [black,line width=1pt] (-\lone,4,\lthree) -- (-\lone,4,0);
	
	\draw [black,line width=1pt] (\lone,0,0) -- (\lone,4,0) -- (\lone-3,4,0);
	\draw [black,line width=1pt] (-\lone,4,0) -- (-\lone+3,4,0) -- (-\lone+3,2,0) -- (-\lone+3.5,2,0);
	
	\draw [black,line width=0.7pt,dotted] (-\lone+3,2,\lthree) -- (-\lone+3,2,0);
	\draw [black,line width=0.7pt,dotted] (-\lone,4,0) -- (-\lone,0,0) -- (\lone,0,0);
	\draw [black,line width=0.7pt,dotted] (\lone-3,4,0) -- (\lone-3,2,0) -- (-3,2,0);
	
	\draw [black,line width=0.7pt] (-6,3,1) -- (1,5,0) node[anchor=north]{$\Om_2$} -- (5.5,4,1);
\end{tikzpicture}
\begin{tikzpicture}[scale=0.31,tdplot_main_coords]
	\def\lone{7}
	\def\lthree{3}
	
	\fill [black,opacity=.2] (-\lone,0,2) -- (\lone,0,2) -- (\lone,4,2) -- (\lone-3,4,2) -- (\lone-3,2,2) -- (-\lone+3,2,2) -- (-\lone+3,4,2) -- (-\lone,4,2);
	\fill [black,opacity=.4] (7,0,2) -- (7,4,2) -- (7,4,1.5) -- (7,0,1.5);
	\fill [black,opacity=.4] (-4,2,2) -- (-4,4,2) -- (-4,4,1.5) -- (-4,2,1.5);
	\fill [black,opacity=.4] (4,4,2) -- (7,4,2) -- (7,4,1.5) -- (4,4,1.5);
	\fill [black,opacity=.4] (-4,4,2) -- (-7,4,2) -- (-7,4,1.5) -- (-4,4,1.5);
	\fill [black,opacity=.4] (4,2,2) -- (-4,2,2) -- (-4,2,1.5) -- (2.2,2,1.5);

	\draw [black,line width=1pt] (-\lone,0,\lthree) -- (\lone,0,\lthree) -- (\lone,4,\lthree) -- (\lone-3,4,\lthree) -- (\lone-3,2,\lthree) -- (-\lone+3,2,\lthree) -- (-\lone+3,4,\lthree) -- (-\lone,4,\lthree) -- cycle;
	
	\draw [black,line width=1pt] (\lone,0,\lthree) -- (\lone,0,0);
	\draw [black,line width=1pt] (\lone,4,\lthree) -- (\lone,4,0);
	\draw [black,line width=1pt] (\lone-3,4,\lthree) -- (\lone-3,4,0);
	\draw [black,line width=1pt] (-\lone+3,4,\lthree) -- (-\lone+3,4,0);
	\draw [black,line width=1pt] (-\lone,4,\lthree) -- (-\lone,4,0);
	
	\draw [black,line width=1pt] (\lone,0,0) -- (\lone,4,0) -- (\lone-3,4,0);
	\draw [black,line width=1pt] (-\lone,4,0) -- (-\lone+3,4,0) -- (-\lone+3,2,0) -- (-\lone+3.5,2,0);
	
	\draw [black,line width=0.7pt,dotted] (-\lone+3,2,\lthree) -- (-\lone+3,2,0);
	\draw [black,line width=0.7pt,dotted] (-\lone,4,0) -- (-\lone,0,0) -- (\lone,0,0);
	\draw [black,line width=0.7pt,dotted] (\lone-3,4,0) -- (\lone-3,2,0) -- (-3,2,0);
	
	\draw [black,line width=0.7pt] (-6,2.5,1.25) -- (1,5,0) node[anchor=north]{$\Om_3$};
\end{tikzpicture}
\begin{tikzpicture}[scale=0.31,tdplot_main_coords]
	\def\lone{7}
	\def\lthree{3}

	\fill [black,opacity=.4] (-4,3,\lthree) -- (-4,4,\lthree) -- (-5,4,\lthree) -- (-5,3,\lthree) ;
	\fill [black,opacity=.4] (-4,4,\lthree) -- (-5,4,\lthree) -- (-5,4,0) -- (-4,4,0);
	\fill [black,opacity=.4] (-4,4,\lthree) -- (-4,4,0) -- (-4,3,0) -- (-4,3,\lthree);

	\draw [black,line width=1pt] (-\lone,0,\lthree) -- (\lone,0,\lthree) -- (\lone,4,\lthree) -- (\lone-3,4,\lthree) -- (\lone-3,2,\lthree) -- (-\lone+3,2,\lthree) -- (-\lone+3,4,\lthree) -- (-\lone,4,\lthree) -- cycle;
	
	\draw [black,line width=1pt] (\lone,0,\lthree) -- (\lone,0,0);
	\draw [black,line width=1pt] (\lone,4,\lthree) -- (\lone,4,0);
	\draw [black,line width=1pt] (\lone-3,4,\lthree) -- (\lone-3,4,0);
	\draw [black,line width=1pt] (-\lone+3,4,\lthree) -- (-\lone+3,4,0);
	\draw [black,line width=1pt] (-\lone,4,\lthree) -- (-\lone,4,0);
	
	\draw [black,line width=1pt] (\lone,0,0) -- (\lone,4,0) -- (\lone-3,4,0);
	\draw [black,line width=1pt] (-\lone,4,0) -- (-\lone+3,4,0) -- (-\lone+3,2,0) -- (-\lone+3.5,2,0);
	
	\draw [black,line width=0.7pt,dotted] (-\lone+3,2,\lthree) -- (-\lone+3,2,0);
	\draw [black,line width=0.7pt,dotted] (-\lone,4,0) -- (-\lone,0,0) -- (\lone,0,0);
	\draw [black,line width=0.7pt,dotted] (\lone-3,4,0) -- (\lone-3,2,0) -- (-3,2,0);
	
	\draw [black,line width=0.7pt] (-6,3,1) -- (1,5,0) node[anchor=north]{$\Om_{12}$};
\end{tikzpicture}
\begin{tikzpicture}[scale=0.31,tdplot_main_coords]
	\def\lone{7}
	\def\lthree{3}

	\fill [black,opacity=.2] (-4,3,2) -- (-4,4,2) -- (-7,4,2) -- (-7,3,2);
	\fill [black,opacity=.4] (-4,4,2) -- (-7,4,2) -- (-7,4,0) -- (-4,4,0);
	\fill [black,opacity=.4] (-4,4,2) -- (-4,4,0) -- (-4,3,0) -- (-4,3,2);
	
	\fill [black,opacity=.2] (4,3,2) -- (4,4,2) -- (7,4,2) -- (7,3,2);
	\fill [black,opacity=.4] (4,4,2) -- (7,4,2) -- (7,4,0) -- (4,4,0);
	\fill [black,opacity=.4] (7,4,2) -- (7,4,0) -- (7,3,0) -- (7,3,2);

	\draw [black,line width=1pt] (-\lone,0,\lthree) -- (\lone,0,\lthree) -- (\lone,4,\lthree) -- (\lone-3,4,\lthree) -- (\lone-3,2,\lthree) -- (-\lone+3,2,\lthree) -- (-\lone+3,4,\lthree) -- (-\lone,4,\lthree) -- cycle;
	
	\draw [black,line width=1pt] (\lone,0,\lthree) -- (\lone,0,0);
	\draw [black,line width=1pt] (\lone,4,\lthree) -- (\lone,4,0);
	\draw [black,line width=1pt] (\lone-3,4,\lthree) -- (\lone-3,4,0);
	\draw [black,line width=1pt] (-\lone+3,4,\lthree) -- (-\lone+3,4,0);
	\draw [black,line width=1pt] (-\lone,4,\lthree) -- (-\lone,4,0);
	
	\draw [black,line width=1pt] (\lone,0,0) -- (\lone,4,0) -- (\lone-3,4,0);
	\draw [black,line width=1pt] (-\lone,4,0) -- (-\lone+3,4,0) -- (-\lone+3,2,0) -- (-\lone+3.5,2,0);
	
	\draw [black,line width=0.7pt,dotted] (-\lone+3,2,\lthree) -- (-\lone+3,2,0);
	\draw [black,line width=0.7pt,dotted] (-\lone,4,0) -- (-\lone,0,0) -- (\lone,0,0);
	\draw [black,line width=0.7pt,dotted] (\lone-3,4,0) -- (\lone-3,2,0) -- (-3,2,0);
	
	\draw [black,line width=0.7pt] (-6,3,1) -- (1,5,0) node[anchor=north]{$\Om_{23}$} -- (5.5,4,1);
\end{tikzpicture}
\begin{tikzpicture}[scale=0.31,tdplot_main_coords]
	\def\lone{7}
	\def\lthree{3}
	
	\fill [black,opacity=.2] (\lone-3,0,2) -- (\lone-1,0,2) -- (\lone-1,4,2) -- (\lone-3,4,2);
	\fill [black,opacity=.2] (6,0,2) -- (6,4,2) -- (6,4,1.5) -- (6,0,1.5);
	\fill [black,opacity=.4] (4,4,2) -- (6,4,2) -- (6,4,1.5) -- (4,4,1.5);

	\draw [black,line width=1pt] (-\lone,0,\lthree) -- (\lone,0,\lthree) -- (\lone,4,\lthree) -- (\lone-3,4,\lthree) -- (\lone-3,2,\lthree) -- (-\lone+3,2,\lthree) -- (-\lone+3,4,\lthree) -- (-\lone,4,\lthree) -- cycle;
	
	\draw [black,line width=1pt] (\lone,0,\lthree) -- (\lone,0,0);
	\draw [black,line width=1pt] (\lone,4,\lthree) -- (\lone,4,0);
	\draw [black,line width=1pt] (\lone-3,4,\lthree) -- (\lone-3,4,0);
	\draw [black,line width=1pt] (-\lone+3,4,\lthree) -- (-\lone+3,4,0);
	\draw [black,line width=1pt] (-\lone,4,\lthree) -- (-\lone,4,0);
	
	\draw [black,line width=1pt] (\lone,0,0) -- (\lone,4,0) -- (\lone-3,4,0);
	\draw [black,line width=1pt] (-\lone,4,0) -- (-\lone+3,4,0) -- (-\lone+3,2,0) -- (-\lone+3.5,2,0);
	
	\draw [black,line width=0.7pt,dotted] (-\lone+3,2,\lthree) -- (-\lone+3,2,0);
	\draw [black,line width=0.7pt,dotted] (-\lone,4,0) -- (-\lone,0,0) -- (\lone,0,0);
	\draw [black,line width=0.7pt,dotted] (\lone-3,4,0) -- (\lone-3,2,0) -- (-3,2,0);
	
	\draw [black,line width=0.7pt] (5.3,4,1.8) -- (1,5,0) node[anchor=north]{$\Om_{31}$};
\end{tikzpicture}
\caption{Examples of $\Om_i$ and $\Om_{ij}$. For illustrative purposes the $\Om_{ij}$ are chosen such that they belong to the $\Om_{i}$.}
\label{fig:cuts}
\end{figure}


\section{Local Zero Mean Properties}
\label{sec:ZM}

In order to prove the local zero mean properties, we show that the mean value of functions from $\dc$ and $\rc$ can be locally estimated from below and above by $\lo$-norms of their divergence and rotation, respectively.

\begin{thm} \label{thm:D}
For any $\phi\in\dc(\Om)$ the estimate
\begin{equation*}
	\forall i\in\{1,2,3\}
	\qquad
	\bigg| \int_{\Om_i} \phi_i \di x \bigg|
	\leq
	(\beta_i-\alpha_i) \norm{\div\phi}_{\lo(\Om)}
\end{equation*}
holds for an arbitrary $\Om_i$.
\end{thm}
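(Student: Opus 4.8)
The plan is to write the localized integral $\int_{\Om_i}\phi_i\di x$ as the $\lt$-inner product of $\phi$ with the gradient of a suitably chosen scalar potential, and then to move the derivative onto $\div\phi$ via the integration-by-parts identity $\scplt{\na\varphi}{\phi}=-\scplt{\varphi}{\div\phi}$ recorded above, which holds for every $\varphi\in\ho$ and $\phi\in\dc$. This mirrors the computation \eqref{eq:Dglobal} proving the global zero mean property, but with a test function adapted to the slab $\Om_i$ rather than the linear function $x_i$.

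First I would note that, since $\set{x\in\Om}{\alpha_i<x_i<\beta_i}=\Om_i$ constrains only the single coordinate $x_i$, the restriction to $\Om_i$ is encoded by the indicator $\chi$ of the interval $(\alpha_i,\beta_i)$ evaluated at $x_i$, so that $\int_{\Om_i}\phi_i\di x=\int_\Om\chi(x_i)\,\phi_i\di x$. The key observation is that $\chi(x_i)$ is the $i$-th partial derivative of a one-variable ramp.

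Accordingly, I would introduce the Lipschitz potential $\psi(x):=G(x_i)$, where $G$ is the continuous, piecewise-linear function equal to $0$ for $t\le\alpha_i$, to $t-\alpha_i$ for $\alpha_i\le t\le\beta_i$, and to $\beta_i-\alpha_i$ for $t\ge\beta_i$. Then $G'=\chi$ almost everywhere, so $\na\psi$ has $i$-th component $\chi(x_i)$ and vanishing other components; moreover $0\le G\le\beta_i-\alpha_i$, and since $G$ is Lipschitz and $\Om$ is bounded we have $\psi\in\ho(\Om)$. By construction $\scplt{\na\psi}{\phi}=\int_\Om\chi(x_i)\,\phi_i\di x=\int_{\Om_i}\phi_i\di x$.

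Applying the integration-by-parts identity with $\varphi=\psi$ then gives $\int_{\Om_i}\phi_i\di x=-\scplt{\psi}{\div\phi}=-\int_\Om G(x_i)\,\div\phi\di x$, and taking absolute values together with the pointwise bound $0\le G\le\beta_i-\alpha_i$ yields $\abs{\int_{\Om_i}\phi_i\di x}\le(\beta_i-\alpha_i)\int_\Om\abs{\div\phi}\di x=(\beta_i-\alpha_i)\norm{\div\phi}_{\lo(\Om)}$, which is exactly the claim; the right-hand side is finite because $\Om$ is bounded, so that $\div\phi\in\lt(\Om)\subset\lo(\Om)$. I do not expect a genuine obstacle here: everything reduces to choosing the correct test potential, and the only points requiring care are the membership $\psi\in\ho(\Om)$ (immediate from Lipschitz continuity) and the sharp pointwise bound on $G$, which is precisely what produces the factor $\beta_i-\alpha_i$. (Centering the ramp symmetrically on $(\alpha_i,\beta_i)$ would even replace this factor by $(\beta_i-\alpha_i)/2$, so the stated constant is comfortably within reach.)
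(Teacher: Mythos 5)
Your proof is correct, but it takes a genuinely different route from the paper's. You test $\phi$ against the gradient of an explicit Lipschitz ramp $\psi(x)=G(x_i)\in\ho(\Om)$, whose weak gradient is $\chi_{(\alpha_i,\beta_i)}(x_i)\,e_i$, and then invoke the integration-by-parts identity $\scplt{\na\psi}{\phi}=-\scplt{\psi}{\div\phi}$, valid for all $\psi\in\ho$ and $\phi\in\dc$; the density of $\cic(\Om)$ in $\dc(\Om)$ is thereby absorbed into that identity rather than argued separately. The paper instead works directly with $\phi\in\cic(\Om)$, extends by zero to the cuboid $I$, represents each component via the Fundamental Theorem of Calculus, sums the three representations so that the integrand telescopes into $\div\phih$, chooses the free endpoints $x_2=l_2$, $x_3=l_3$ to make two of the three terms vanish, integrates over $(\alpha_1,\beta_1)$, and only then passes to general $\phi$ by density. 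Your argument is the exact local analogue of the paper's own global computation \eqref{eq:Dglobal}, with $x_i$ replaced by the ramp; it is shorter, and, as you observe, a centered ramp even yields the sharper constant $(\beta_i-\alpha_i)/2$, which the paper's telescoping argument does not obviously provide. The only detail worth spelling out explicitly is that the weak gradient of the Lipschitz composition $G(x_i)$ really is $G'(x_i)\,e_i$, which is standard. The paper's heavier FTC machinery is presumably set up with the companion Theorem \ref{thm:R} in mind, where the rotation must be localized to a single component on a slab $\Om_k$; but a duality argument in your style works there too, e.g.\ with the potential $v=(0,0,G(x_2)\chi_{(\alpha_3,\beta_3)}(x_3))\in\r$, so your approach is not a dead end for the rest of the section.
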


\begin{proof}
For any $\phi\in\cic(\Om)$ its zero extension $\phih:I\rightarrow\reals^3$ belongs to $\cic(I)$. By the Fundamental Theorem of Calculus the components of this extension can be represented as
\begin{align*}
	\phih_1(x_1,x_2,x_3) & = \int_0^{x_1} \p_a\phih_1(a,x_2,x_3) \di a , \\
	\phih_2(x_1,x_2,x_3) & = \int_0^{x_2} \p_b\phih_2(x_1,b,x_3) \di b , \\
	\phih_3(x_1,x_2,x_3) & = \int_0^{x_3} \p_c\phih_3(x_1,x_2,c) \di c .
\end{align*}
Using the above representations we write
\begin{align} \label{eq:D1}
	  & \pm \int_0^{x_3}\int_0^{x_2} \phih_1(x_1,b,c) \di(bc)
		\pm \int_0^{x_3}\int_0^{x_1} \phih_2(a,x_2,c) \di(ac)
	    \pm \int_0^{x_2}\int_0^{x_1} \phih_3(a,b,x_3) \di(ab) \\
	= \; & \pm \int_0^{x_3}\int_0^{x_2}\int_0^{x_1}
		\p_a\phih_1(a,b,c)
		+ \p_b\phih_2(a,b,c)
		+ \p_c\phih_3(a,b,c)
		\di(abc) \nonumber \\
	\leq \; & \norm{\div\phih}_{\lo(I)} . \nonumber
\end{align}
By choosing $x_2=l_2$ and $x_3=l_3$, the two last terms on the l.h.s. vanish, and we obtain
\begin{equation*}
	\pm \int_0^{l_3}\int_0^{l_2} \phih_1(x_1,b,c) \di(bc)
	\leq \norm{\div\phih}_{\lo(I)} .
\end{equation*}
By integrating w.r.t. $x_1$ over $(\alpha_1,\beta_1)$ we obtain
\begin{equation*}
	\pm \int_{I_1} \phih_1 \di x \leq (\beta_1-\alpha_1) \norm{\div\phih}_{\lo(I)}
	\quad \Rightarrow \quad
	\pm \int_{\Om_1} \phi_1 \di x \leq (\beta_1-\alpha_1) \norm{\div\phi}_{\lo(\Om)} ,
\end{equation*}
since the integrals are nonzero only in $\Om$. By density the latter inequality above holds for any $\phi \in \dc(\Om)$, and we have proven the assertion for $i=1$. To prove the cases $i=2$ and $i=3$, one chooses $x_1=l_1,x_3=l_3$ and $x_1=l_1,x_2=l_2$ in \eqref{eq:D1}, respectively, and proceeds in a similar manner.
\end{proof}

\begin{thm} \label{thm:R}
For any $\phi\in\rc(\Om)$ the estimate
\begin{align*}
	\forall (i,j,k)\in\{1,2,3\}_3
	\qquad
	\bigg| \int_{\Om_{jk}} \phi_i \di x \bigg|
	\leq
	(\beta_j-\alpha_j) \norm{(\rot\phi)_k}_{\lo(\Om_k)}
\end{align*}
holds for an arbitrary $\Om_{jk}$.
\end{thm}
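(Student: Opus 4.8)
The plan is to follow the blueprint of Theorem~\ref{thm:D}: it suffices to prove the estimate for $\phi\in\cic(\Om)$ and then pass to $\phi\in\rc(\Om)$ by density in the graph norm, since both sides depend continuously on $\phi$ and $\rot\phi$ in $\lt(\Om)$ (note $\lt(\Om_k)\hookrightarrow\lo(\Om_k)$). For such $\phi$ the zero extension $\phih:I\to\reals^3$ lies in $\cic(I)$ and vanishes on $\p I$, which is the only property of the extension we shall use. The structural simplification compared with the divergence case is that each component $(\rot\phi)_k$ involves only \emph{two} of the three partial derivatives, so no summation of three terms is required and the argument becomes a two-step integration.

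Fix $(i,j,k)\in\{1,2,3\}_3$ and recall that $(\rot\phi)_k=\pm(\p_i\phi_j-\p_j\phi_i)$, the sign being that of the permutation; it is immaterial since the statement involves an absolute value. First I would integrate $(\rot\phi)_k$ in the $x_i$-direction over $(0,l_i)$. By the Fundamental Theorem of Calculus the term $\p_i\phih_j$ integrates to $\phih_j(\dots,l_i,\dots)-\phih_j(\dots,0,\dots)=0$, because $\phih$ vanishes on the two faces $x_i\in\{0,l_i\}$ of $\p I$, whereas the term $\p_j\phih_i$ yields $\p_j\!\int_0^{l_i}\phih_i\di x_i$. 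Writing $F(x_j,x_k):=\int_0^{l_i}\phih_i\di x_i$, this gives $\int_0^{l_i}(\rot\phi)_k\di x_i=\mp\,\p_j F$. A second integration in the $x_j$-direction over $(0,x_j)$, using $F(0,x_k)=0$ (again the vanishing of $\phih$ on the face $x_j=0$), recovers $F$ as a double integral of $(\rot\phi)_k$.

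It then remains to integrate $F$ over the slab in the $(x_j,x_k)$-variables. Since $\phih$ vanishes off $\Om$, one has $\int_{\Om_{jk}}\phi_i=\int_{\alpha_k}^{\beta_k}\!\int_{\alpha_j}^{\beta_j}F\di x_j\di x_k$, the $x_i$-range being full. Taking absolute values, bounding the inner range $(0,x_j)$ occurring inside $F$ by the full $(0,l_j)$ extracts the factor $(\beta_j-\alpha_j)$, and the surviving triple integral of $\abs{(\rot\phi)_k}$ runs over $\{0<x_i<l_i,\ 0<x_j<l_j,\ \alpha_k<x_k<\beta_k\}=I_k$; by the support property this equals $\norm{(\rot\phi)_k}_{\lo(\Om_k)}$, which is the asserted bound. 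Density in $\rc(\Om)$ then completes the proof exactly as in Theorem~\ref{thm:D}.

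The delicate part is purely the bookkeeping of indices and domains, and in particular understanding why the right-hand side carries the \emph{localized} norm $\norm{(\rot\phi)_k}_{\lo(\Om_k)}$ rather than a norm over all of $\Om$ as in Theorem~\ref{thm:D}. The reason is that $(\rot\phi)_k$ contains no $\p_k$-derivative, so no integration in the $x_k$-direction is ever carried out; that variable passes through the whole computation as a parameter, and the restriction $\alpha_k<x_k<\beta_k$ therefore survives untouched. One must also be careful to integrate \emph{first} in the $x_i$-direction, so that it is the $\phi_j$-term and not the $\phi_i$-term that is annihilated, and to verify that both boundary evaluations indeed vanish.
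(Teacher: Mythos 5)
Your proposal is correct and follows essentially the same route as the paper: zero extension to $I$, a Fundamental Theorem of Calculus identity expressing a full-range integral of $\phih_i$ in one direction as a double integral of $(\rot\phih)_k$ (with the $\phih_j$-term annihilated by the vanishing of $\phih$ on $\p I$), then integration over the slab to extract the factor $(\beta_j-\alpha_j)$ and the localized norm over $I_k$, and finally density in the graph norm of $\rc(\Om)$. The only cosmetic difference is that the paper keeps both upper limits variable in its key identity \eqref{eq:R1} and specializes $x_1=l_1$ or $x_2=l_2$ to obtain two estimates at once, whereas you integrate over the full $x_i$-range from the start and treat a general permutation $(i,j,k)$; the underlying computation is identical.
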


\begin{proof}
For any $\phi\in\cic(\Om)$ its zero extension $\phih:I\rightarrow\reals^3$ belongs to $\cic(I)$. By the Fundamental Theorem of Calculus the components of this extension can be represented as
\begin{equation*}
	\phih_2(x_1,x_2,x_3) = \int_0^{x_1} \p_a\phih_2(a,x_2,x_3) \di a ,
	\qquad
	\phih_1(x_1,x_2,x_3) = \int_0^{x_2} \p_b\phih_1(x_1,b,x_3) \di b .
\end{equation*}
Using the above representations we write
\begin{align}
	& \pm \int_0^{x_2} \phih_2(x_1,b,x_3) \di b
	\mp \int_0^{x_1} \phih_1(a,x_2,x_3) \di a \label{eq:R1} \\
	= &  \pm \int_0^{x_2}\int_0^{x_1} \p_a\phih_2(a,b,x_3)
		- \p_b\phih_1(a,b,x_3) \di(ab) \nonumber \\
	\leq & \int_0^{l_2}\int_0^{l_1} |\p_a\phih_2(a,b,x_3)
		- \p_b\phih_1(a,b,x_3)| \di(ab) . \nonumber 
\end{align}
By choosing $x_1=l_1$ in \eqref{eq:R1} and integrating w.r.t. $x_3$ over $(\alpha_3,\beta_3)$, we obtain
\begin{equation*}
	\pm \int_{\alpha_3}^{\beta_3}\int_0^{l_1} \phih_1(a,x_2,x_3) \di(ax_3)
	\leq \norm{(\rot\phih)_3}_{\lo(I_3)} .
\end{equation*}
By integrating w.r.t. $x_2$ over $(\alpha_2,\beta_2)$ we obtain
\begin{equation} \label{eq:R2}
	\pm \int_{I_{23}} \phih_1 \di x
	\leq (\beta_2-\alpha_2) \norm{(\rot\phih)_3}_{\lo(I_3)}
	\quad \Rightarrow \quad
	\pm \int_{\Om_{23}} \phi_1 \di x
	\leq (\beta_2-\alpha_2) \norm{(\rot\phi)_3}_{\lo(\Om_3)} ,	
\end{equation}
since the integrals are nonzero only in $\Om$. On the other hand, by choosing $x_2=l_2$ in \eqref{eq:R1} and integrating w.r.t. $x_3$ over $(\alpha_3,\beta_3)$, we obtain
\begin{equation*}
	\pm \int_{\alpha_3}^{\beta_3}\int_0^{l_2} \phih_2(x_1,b,x_3) \di(bx_3)
	\leq \norm{(\rot\phih)_3}_{\lo(I_3)} .
\end{equation*}
By integrating w.r.t. $x_1$ over $(\alpha_1,\beta_1)$ we obtain
\begin{equation} \label{eq:R3}
	\pm \int_{I_{13}} \phih_2 \di x
	\leq (\beta_1-\alpha_1) \norm{(\rot\phih)_3}_{\lo(I_3)}
	\quad \Rightarrow \quad
	\pm \int_{\Om_{13}} \phi_2 \di x
	\leq (\beta_1-\alpha_1) \norm{(\rot\phi)_3}_{\lo(\Om_3)} ,
\end{equation}
since the integrals are nonzero only in $\Om$. By density the latter inequalities of \eqref{eq:R2} and \eqref{eq:R3} hold for any $\phi\in\rc(\Om)$, and we have proven two of the six estimates of the assertion. The remaining estimates are proven in a similar manner by repeating the proof using the representations
\begin{equation*}
	\phih_1(x_1,x_2,x_3) = \int_0^{x_3} \p_c\phih_1(x_1,x_2,c) \di c ,
	\qquad
	\phih_3(x_1,x_2,x_3) = \int_0^{x_1} \p_a\phih_3(a,x_2,x_3) \di a ,
\end{equation*}
and
\begin{equation*}
	\phih_3(x_1,x_2,x_3) = \int_0^{x_2} \p_b\phih_3(x_1,b,x_3) \di b ,
	\qquad
	\phih_2(x_1,x_2,x_3) = \int_0^{x_3} \p_c\phih_2(x_1,x_2,c) \di c .
\end{equation*}
\end{proof}

The following two corollaries are directly implied by Theorems \ref{thm:D} and \ref{thm:R}.

\begin{cor} \label{cor:D}
Let $\phi\in\dcz(\Om)$. Then $\phi_i \in \ltz(\Om_i)$ for any $\Om_i$, where $i\in\{1,2,3\}$.
\end{cor}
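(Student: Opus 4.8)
The plan is to read this off directly from Theorem \ref{thm:D}, since the corollary is nothing more than its specialization to divergence-free fields. By definition a field $\phi\in\dcz(\Om)$ lies in $\dc(\Om)$ and satisfies $\div\phi=0$ as an element of $\lt(\Om)$. Because $\Om$ is bounded, hence of finite Lebesgue measure, the vanishing of $\div\phi$ in $\lt(\Om)$ forces $\norm{\div\phi}_{\lo(\Om)}=0$.

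Next I would fix $i\in\{1,2,3\}$ together with an arbitrary subdomain $\Om_i$, and apply Theorem \ref{thm:D} to $\phi$. The right-hand side of that estimate equals $(\beta_i-\alpha_i)\norm{\div\phi}_{\lo(\Om)}=0$, so that
\begin{equation*}
	\bigg|\int_{\Om_i}\phi_i\di x\bigg|\leq 0 ,
\end{equation*}
and therefore $\int_{\Om_i}\phi_i\di x=0$. Since $\Om_i$ was arbitrary and $i$ was arbitrary, this establishes the vanishing of the mean of each component on its associated subdomain.

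It then remains only to record the membership $\phi_i\in\lt(\Om_i)$: since $\phi\in\dc(\Om)\subset\lt(\Om)$, every component $\phi_i$ is square-integrable over $\Om$, and restriction to the subset $\Om_i\subset\Om$ preserves square-integrability. Combining this with the vanishing mean just obtained yields $\phi_i\in\ltz(\Om_i)$, as claimed.

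I expect no genuine obstacle here, as the entire analytic content is carried by Theorem \ref{thm:D} and the corollary is a one-line substitution. The only step deserving an explicit word is the passage from $\div\phi=0$ in $\lt(\Om)$ to a vanishing $\lo(\Om)$-norm, which is immediate from the finiteness of $|\Om|$ and hence from the continuous embedding $\lt(\Om)\hookrightarrow\lo(\Om)$.
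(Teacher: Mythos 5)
Your argument is correct and is exactly the paper's intended one: the author simply states that the corollary is directly implied by Theorem \ref{thm:D}, which is precisely your substitution of $\div\phi=0$ into the estimate. No further comment is needed.
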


\begin{cor} \label{cor:R}
Let $\phi\in\rcz(\Om)$. Then $\phi_i \in \ltz(\Om_{jk})$ for any $\Om_{jk}$, where $(i,j,k)\in\{1,2,3\}_3$.
\end{cor}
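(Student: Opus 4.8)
The plan is to apply Theorem \ref{thm:R} directly and exploit the fact that the rotation vanishes. Since $\phi\in\rcz(\Om)$, by definition $\rot\phi=0$ in $\lt(\Om)$, so in particular every component satisfies $(\rot\phi)_k=0$ almost everywhere. First I would observe that this forces $\norm{(\rot\phi)_k}_{\lo(\Om_k)}=0$ for each $k\in\{1,2,3\}$ and for every admissible subdomain $\Om_k$.

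Next I would invoke the estimate of Theorem \ref{thm:R}: for any $(i,j,k)\in\{1,2,3\}_3$ and any $\Om_{jk}$ we have
\begin{equation*}
	\bigg| \int_{\Om_{jk}} \phi_i \di x \bigg|
	\leq (\beta_j-\alpha_j) \norm{(\rot\phi)_k}_{\lo(\Om_k)} = 0 .
\end{equation*}
Hence $\int_{\Om_{jk}} \phi_i \di x = 0$, which is exactly the assertion $\phi_i\in\ltz(\Om_{jk})$. Since the estimate of Theorem \ref{thm:R} already ranges over all permutations $(i,j,k)\in\{1,2,3\}_3$ and all admissible $\Om_{jk}$, the conclusion holds in full generality with no further case distinction.

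There is essentially no obstacle here: the entire content of the corollary is already contained in Theorem \ref{thm:R}, and one obtains the result simply by specializing to curl-free fields, where the upper bound collapses to zero. The only point worth noting is that Theorem \ref{thm:R} supplies all six permutations and every choice of $\Om_{jk}$ at once, so nothing beyond a single substitution is required.
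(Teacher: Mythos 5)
Your proposal is correct and is precisely the argument the paper intends: the paper states that Corollary \ref{cor:R} is ``directly implied'' by Theorem \ref{thm:R}, and the implication is exactly the specialization you give, where $\rot\phi=0$ makes the right-hand side of the estimate vanish for every permutation $(i,j,k)$ and every $\Om_{jk}$.
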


\begin{rem} \label{rem:R}
Theorem \ref{thm:R} allows for more general statements about $\rc$ than Corollary \ref{cor:R}:
\begin{itemize}
\item[\bf(i)] It is easy to see that Corollary \ref{cor:R} holds not only for $\rcz$ but even for
\begin{equation*}
	\set{\psi\in\rc}{(\rot\psi)_i=(\rot\psi)_j=0, \; (i,j)\in\{1,2,3\}_2} .
\end{equation*}
\item[\bf(ii)] Even if only one component of the rotation of $\phi\in\rc$ vanishes on a subset of $\Om$, in certain cases we might still be able to obtain information about where $\phi$ has zero mean. If, for example, $(\rot\phi)_3=0$ in $\om\subset\Om$ which is a $\Om_3$-set, then Theorem \ref{thm:R} implies that $\phi_1,\phi_2\in\ltz(\om)$.
\end{itemize}
\end{rem}


\section{An Application for Convex Domains}
\label{sec:C}

In this section we assume the domain $\Om$ to be convex. Then $\Om$ is Lipschitz \cite{grisvard1985}, and Rellich's selection theorem and Weck's selection theorem \cite{weck} hold, i.e., all spaces in \eqref{eq:hd1}--\eqref{eq:hd3} are closed. Furthermore, the Dirichlet and Neumann fields are absent, i.e., the Helmholtz decompositions \eqref{eq:hd1}--\eqref{eq:hd3} become
\begin{align}
	\lt & = \na\hoc \oplus \dz
			= \rcz \oplus \rot\r ,
		& \na\hoc & = \rcz , & \quad \dz & = \rot\r ,  \label{eq:hdconvex1} \\
	\lt & = \na\ho \oplus \dcz
			= \rz \oplus \, \rot\rc ,
		& \na\ho & = \rz , & \quad \dcz & = \rot\rc .  \label{eq:hdconvex2}
\end{align}
In the following we consider the inequalities
\begin{align}
	& \forall \varphi\in\ho\cap\ltz
	& & \normlt{\varphi} \leq \cp \normlt{\na\varphi} , \nonumber \\
	& \forall \phi \in \rc\cap\dz
	& & \normlt{\phi} \leq \cm \normlt{\rot\phi} , \label{eq:max1} \\
	& \forall \phi \in \r\cap\,\dcz
	& & \normlt{\phi} \leq \cmm \normlt{\rot\phi} , \label{eq:max2}
\end{align}
where the first is the Poincar\'e inequality, and the latter Maxwell-type inequalities. The Poincar\'e constant $\cp>0$ and Maxwell constants $\cm,\cmm>0$ are under the assumptions finite. In what follows, we assume we have chosen the best, i.e., the smallest possible constants in these inequalities. Note that these constants are related to eigenvalues of the Laplace and $\rot\rot$ operators.

The proofs of finiteness of the above constants are based on indirect arguments, and give no hints as to their magnitude. However, in some situations explicit knowledge of these constants is needed: they appear, e.g., in functional type a posteriori error estimates for partial differential equations \cite{repinbookone}. For convex domains there is a constructive method for obtaining an upper bound of $\cp$ due to Payne and Weinberger \cite{payneweinbergerpoincareconvex} (see also \cite{bebendorfpoincareconvex}). The bound is
\begin{equation} \label{eq:cpbound}
	\cp \leq \frac{d}{\pi} ,
\end{equation}
where $d=\diam\Om$ is the diameter of $\Om$. In \cite{paulymaxconst0,paulymaxconst1,paulymaxconst2} Pauly has shown that for convex domains $\cm=\cmm\leq\cp$, so together with \eqref{eq:cpbound} we have
\begin{equation} \label{eq:cm}
	\cm=\cmm\leq\cp \leq \frac{d}{\pi} .
\end{equation}
Using Corollary \ref{cor:D} this upper bound can be slightly improved. However, for the sake of completeness, we first show that the Maxwell constants are indeed equal.

\begin{lem} \label{eq:cmeq}
$\cm=\cmm$.
\end{lem}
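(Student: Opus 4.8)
The plan is to identify $\cm$ and $\cmm$ as the reciprocal square roots of the smallest positive eigenvalues $\lambda,\mu$ of the $\rot\rot$-operator on $\rc\cap\dz$ and on $\r\cap\dcz$, and then to carry eigenfields from one problem to the other by applying $\rot$: this inverts the Rayleigh quotient and forces $\lambda=\mu$. First I rewrite the admissible classes using the convex decompositions. By \eqref{eq:hdconvex1} we have $\dz=\rot\r=\rcz^\perp$, so $\rc\cap\dz=\rc\cap\rcz^\perp$, and by \eqref{eq:hdconvex2} we have $\dcz=\rot\rc=\rz^\perp$, so $\r\cap\dcz=\r\cap\rz^\perp$. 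On these classes the inequalities are nondegenerate: if $\phi\in\rc\cap\dz$ has $\rot\phi=0$, then $\phi\in\rcz\cap\dz=\na\hoc\cap\dz$, i.e. $\phi=\na u$ with $u\in\hoc$ and $\div\na u=0$, whence $\normlt{\na u}^2=-\scplt{u}{\div\na u}=0$ and $\phi=0$; the symmetric statement holds on the other side. Therefore
\[
	\cm^{-2}=\inf_{0\ne\phi\in\rc\cap\dz}\frac{\normlt{\rot\phi}^2}{\normlt{\phi}^2}=:\lambda,\qquad\cmm^{-2}=\inf_{0\ne\psi\in\r\cap\dcz}\frac{\normlt{\rot\psi}^2}{\normlt{\psi}^2}=:\mu,
\]
and since $\Om$ is convex, hence Lipschitz, Weck's selection theorem makes both embeddings into $\lt$ compact, so these infima are attained.

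To prove $\mu\le\lambda$, let $\phi$ attain $\lambda$, so that $\scplt{\rot\phi}{\rot\eta}=\lambda\scplt{\phi}{\eta}$ for all $\eta\in\rc\cap\dz$. Using the splitting $\rc=\rcz\oplus(\rc\cap\rcz^\perp)$ together with $\rot|_{\rcz}=0$ and $\phi\perp\rcz$, this identity upgrades to all $\eta\in\rc$, hence to all $\eta\in\cic$. Setting $\psi:=\rot\phi$, the upgraded identity $\scplt{\psi}{\rot\eta}=\lambda\scplt{\phi}{\eta}$ is exactly the weak statement $\rot\psi=\lambda\phi$, so $\psi\in\r$; moreover $\psi=\rot\phi\in\dcz$, since on $\cic$ the rotation is again a divergence-free field in $\cic$ and this passes to the $\dc$-closure. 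Thus $\psi\in\r\cap\dcz$, and from $\normlt{\psi}^2=\normlt{\rot\phi}^2=\lambda\normlt{\phi}^2$ and $\normlt{\rot\psi}^2=\lambda^2\normlt{\phi}^2$ its Rayleigh quotient equals $\lambda$, giving $\mu\le\lambda$.

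For the reverse inequality, let $\psi$ attain $\mu$; the same upgrade gives $\scplt{\rot\psi}{\rot\eta}=\mu\scplt{\psi}{\eta}$ for all $\eta\in\r$. Since $\psi\in\dcz=\rot\rc$, I write $\psi=\rot\theta$ with $\theta\in\rc\cap\dz$ (the $\rcz$-component of a preimage contributes nothing to $\rot$). Then $\scplt{\psi}{\eta}=\scplt{\rot\theta}{\eta}=\scplt{\theta}{\rot\eta}$ for $\eta\in\r$, so $\scplt{\rot\psi-\mu\theta}{\rot\eta}=0$ for all $\eta\in\r$, i.e. $\rot\psi-\mu\theta\perp\rot\r=\dz$, and hence $\rot\psi-\mu\theta\in\rcz$ by \eqref{eq:hdconvex1}. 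Consequently $\phi:=\rot\psi=\mu\theta+(\rot\psi-\mu\theta)\in\rc$, it is divergence free, and $\rot\phi=\mu\rot\theta=\mu\psi$. Computing its Rayleigh quotient as before yields $\lambda\le\mu$, so $\lambda=\mu$ and $\cm=\cmm$.

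The step I expect to be the main obstacle is the boundary-condition bookkeeping contained in the map $\phi\mapsto\rot\phi$: one must check that $\rot$ sends the tangential class $\rc\cap\dz$ into the normal class $\r\cap\dcz$ and, conversely, that $\rot\psi$ recovers the tangential condition for $\psi\in\r\cap\dcz$. The first is the easy density direction, while the second is where convexity is genuinely used, through $\dcz=\rot\rc$ and $\dz^\perp=\rcz$ from \eqref{eq:hdconvex1}--\eqref{eq:hdconvex2}, which let me represent the minimizer as $\rot\theta$ and thereby read off $\rot\psi\in\rc$. Conceptually, these manipulations say that the reduced operators $\rot\colon\rc\cap\dz\to\dcz$ and $\rot\colon\r\cap\dcz\to\dz$ are mutually adjoint bijections, so their inverses---whose operator norms are precisely $\cm$ and $\cmm$---coincide in norm.
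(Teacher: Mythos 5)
Your proof is correct, but it takes a genuinely different route from the paper's. You set up the variational/spectral picture: identify $\cm^{-2}$ and $\cmm^{-2}$ as the smallest eigenvalues of $\rot\rot$ on $\rc\cap\dz$ and $\r\cap\dcz$, use the Maxwell compactness property to obtain minimizers, derive the Euler--Lagrange identities, and transport eigenfields between the two problems by applying $\rot$ (with the boundary-condition bookkeeping you carry out via \eqref{eq:hdconvex1}--\eqref{eq:hdconvex2}). The paper instead gives a four-line duality argument that never touches minimizers: for $\phi\in\rc\cap\dz$ it uses $\dz=\rot\r=\rot(\r\cap\dcz)$ to write $\phi=\rot\Phi$ with $\Phi\in\r\cap\dcz$, and then
\begin{equation*}
	\normlt{\phi}^2 = \scplt{\phi}{\rot\Phi} = \scplt{\rot\phi}{\Phi}
	\leq \normlt{\rot\phi}\,\normlt{\Phi}
	\leq \cmm\,\normlt{\rot\phi}\,\normlt{\rot\Phi}
	= \cmm\,\normlt{\rot\phi}\,\normlt{\phi},
\end{equation*}
giving $\cm\leq\cmm$, with the symmetric argument for the reverse. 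The paper's route needs only the exactness (closed ranges) of the Helmholtz decompositions plus Cauchy--Schwarz, whereas yours additionally invokes compactness to attain the infima; in exchange, your argument yields more information, namely that the best constants are achieved and that the extremizers of the two problems correspond to one another under $\rot$ (your closing observation that the two reduced $\rot$-operators are mutually adjoint is in fact the abstract principle underlying the paper's computation as well). Both arguments use convexity only through the vanishing of the Dirichlet and Neumann fields, so both extend to the setting mentioned in the paper's remark after the lemma.
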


\begin{proof}
Let $\phi\in\rc\cap\dz$. From \eqref{eq:hdconvex1}--\eqref{eq:hdconvex2} we deduce $\dz=\rot\r=\rot(\r\cap\,\dcz)$. Thus there exists a vector potential $\Phi \in \r\cap\,\dcz$ such that $\rot\Phi=\phi$. Using \eqref{eq:max2} we obtain
\begin{equation*}
	\normlt{\phi}^2 = \scplt{\phi}{\rot\Phi} = \scplt{\rot\phi}{\Phi}
	\leq \normlt{\rot\phi} \normlt{\Phi}
	\leq \cmm \normlt{\rot\phi} \normlt{\rot\Phi} ,
\end{equation*}
which implies $\normlt{\phi} \leq \cmm \normlt{\rot\phi}$. In view of \eqref{eq:max1} we see that $\cm\leq\cmm$. On the other hand, let $\phi\in\r\cap\,\dcz$. From \eqref{eq:hdconvex1}--\eqref{eq:hdconvex2} we deduce $\dcz=\rot\rc=\rot(\rc\cap\dz)$. Thus there exists a vector potential $\Phi \in \rc\cap\dz$ such that $\rot\Phi=\phi$. Using \eqref{eq:max1} we obtain
\begin{equation*}
	\normlt{\phi}^2 = \scplt{\phi}{\rot\Phi} = \scplt{\rot\phi}{\Phi}
	\leq \normlt{\rot\phi} \normlt{\Phi}
	\leq \cm \normlt{\rot\phi} \normlt{\rot\Phi} ,
\end{equation*}
which implies $\normlt{\phi} \leq \cm \normlt{\rot\phi}$. In view of \eqref{eq:max2} we see that $\cmm\leq\cm$, and the assertion is proven.
\end{proof}

Note that the above proof is not restricted to convex domains. It holds true whenever Weck's selection theorem \cite{weck} holds, provided that the Dirichlet and Neumann fields are excluded from the considered functions.

For improving \eqref{eq:cm} we will need the following specialized Poincar\'e inequality.

\begin{lem} \label{lem:cpscalar}
Let $\Om$ be convex, $\varphi\in\ho(\Om)$ be scalar-valued, and $(i,j,k)\in\{1,2,3\}_3$. Assume $\varphi\in\ltz(\Om_i)$ for an arbitrary $\Om_i$. Then we have
\begin{equation*}
	\norm{\varphi}_{\lt(\Om)} \leq \cpw{i} \norm{\na\varphi}_{\lt(\Om)} ,
	\qquad \cpw{i} \leq \cp ,
	\qquad \cpw{i} \leq \frac{d_{jk}}{\pi} ,
\end{equation*}
where $d_{jk}$ is the diameter of the two-dimensional projection of $\Om$ into the $(e_j,e_k)$-plane. Here $e_j$ and $e_k$ denote the $j$-th and $k$-th Euclidean orthonormal basis vectors (see Figure \ref{fig:diams}).
\end{lem}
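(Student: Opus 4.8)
The plan is to read the hypothesis ``$\varphi\in\ltz(\Om_i)$ for an arbitrary $\Om_i$'' as the assertion that $\varphi$ has zero mean over \emph{every} slab $\Om_i=\set{x\in\Om}{\alpha_i<x_i<\beta_i}$ --- which is exactly the property supplied by Corollary \ref{cor:D} --- and to recast this as a cross-sectional zero mean property. Writing $g(t):=\int\varphi\di(x_jx_k)$ for the integral of $\varphi$ over the planar cross-section $\Om^t:=\set{(x_j,x_k)}{x\in\Om,\ x_i=t}$, Fubini gives $\int_{\Om_i}\varphi\di x=\int_{\alpha_i}^{\beta_i}g(t)\di t$, so the hypothesis forces $\int_{\alpha_i}^{\beta_i}g=0$ for all admissible $\alpha_i<\beta_i$ and hence $g=0$ for a.e.\ $t$; that is, $\varphi(\,\cdot\,,t)$ has vanishing mean on almost every cross-section. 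I understand $\cpw{i}$ as the smallest constant for which $\normlt{\varphi}\le\cpw{i}\normlt{\na\varphi}$ holds over all scalar $\varphi\in\ho(\Om)$ with this property, so the task reduces to exhibiting the two admissible constants $\cp$ and $d_{jk}/\pi$ (finiteness of $\cpw{i}$ then being automatic).

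For the bound $\cpw{i}\le\cp$ I would note that the full slab, obtained by taking $\alpha_i=0$ and $\beta_i=l_i$, coincides with $\Om$ (since $\Om\subset I$); integrating $g\equiv0$ in $t$, equivalently, shows that every admissible $\varphi$ already lies in $\ho(\Om)\cap\ltz(\Om)$. The ordinary Poincar\'e inequality then applies and yields $\normlt{\varphi}\le\cp\normlt{\na\varphi}$ for each such $\varphi$; as $\cpw{i}$ is the best constant for the specialized inequality, $\cpw{i}\le\cp$ follows. Conceptually, the admissible class is a subset of the zero-mean class governing $\cp$, so its best constant can only be smaller; convexity is not needed here.

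For the sharper bound $\cpw{i}\le d_{jk}/\pi$ I would slice $\Om$ by the planes $\set{x}{x_i=t}$ and argue cross-section by cross-section. For a.e.\ $t$ the set $\Om^t$ is a bounded convex planar domain (the intersection of the convex set $\Om$ with a plane), its diameter does not exceed $d_{jk}$, and by the Fubini characterization of $\ho$ the restriction $\varphi(\,\cdot\,,t)$ belongs to $\ho(\Om^t)$ with in-plane gradient $(\p_j\varphi,\p_k\varphi)$ and, by the first step, zero mean. Applying the Payne--Weinberger bound \eqref{eq:cpbound}, which holds for convex domains in every dimension, on each $\Om^t$ gives
\begin{align*}
	\int_{\Om^t}\varphi^2\di(x_jx_k)
	& \le\brac{\frac{\diam\Om^t}{\pi}}^2\int_{\Om^t}\brac{(\p_j\varphi)^2+(\p_k\varphi)^2}\di(x_jx_k) \\
	& \le\brac{\frac{d_{jk}}{\pi}}^2\int_{\Om^t}\norm{\na\varphi}^2\di(x_jx_k) ,
\end{align*}
where the last step uses both $\diam\Om^t\le d_{jk}$ and $(\p_j\varphi)^2+(\p_k\varphi)^2\le\norm{\na\varphi}^2$. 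Integrating in $t$ and invoking Fubini once more turns this into $\normlt{\varphi}^2\le(d_{jk}/\pi)^2\normlt{\na\varphi}^2$, whence $\cpw{i}\le d_{jk}/\pi$.

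I expect the only real work to be the measure-theoretic bookkeeping in the slicing step: justifying that for almost every $t$ the cross-section $\Om^t$ is a genuine two-dimensional convex domain (degenerate slices, where $\Om^t$ is empty, a point, or a segment, occur only on a set of $t$ contributing nothing to the integral), that $\varphi(\,\cdot\,,t)\in\ho(\Om^t)$ with the claimed in-plane gradient and vanishing mean for a.e.\ $t$, and that the per-slice estimates recombine by Fubini. The uniform diameter bound $\diam\Om^t\le d_{jk}$ is what renders the constant independent of $t$ and is the geometric heart of the improvement; it is also where the convexity of $\Om$ is genuinely used.
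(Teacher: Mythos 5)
Your argument is correct, but it takes a genuinely different route from the paper's. The paper fixes $i=1$ and stays in three dimensions: it partitions $\Om$ into $N$ nonintersecting convex slabs $\Om_{1,n}$ of width $l_1/N$, on each of which $\varphi$ has zero mean by hypothesis and whose diameters satisfy $\diam\Om_{1,n}\leq\sqrt{d_{23}^2+l_1^2/N^2}$, applies the Payne--Weinberger bound \eqref{eq:cpbound} slab by slab, and sends $N\to\infty$ to drive the constant down to $d_{23}/\pi$. You instead pass to the limit at the outset, converting the zero-mean-on-every-slab hypothesis into a zero-mean property of $\varphi$ on almost every planar cross-section $\set{x\in\Om}{x_i=t}$ and applying the two-dimensional Payne--Weinberger inequality fiberwise; your treatment of $\cpw{i}\leq\cp$ (the admissible class sits inside $\ho\cap\ltz$, so the ordinary Poincar\'e inequality applies) is exactly the reason the paper can assert that bound without comment. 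Both proofs use convexity in the same way: every slab, respectively every slice, of a convex set is convex with diameter at most $d_{jk}$. The paper's finite-slab scheme avoids any restriction theory for Sobolev functions at the cost of a limiting step; your slicing is limit-free and exposes the estimate as a fibered two-dimensional Poincar\'e inequality, at the cost of the measure-theoretic bookkeeping you correctly flag --- that $\varphi$ restricted to a.e.\ slice lies in $\ho$ of that slice with the in-plane gradient and vanishing mean --- which is standard but must be supplied for the proof to be complete.
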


\begin{proof}
Let $i=1$. Under the assumptions there exists a decomposition of $\Om$ into nonintersecting convex $\Om_1$-sets $\Om_{1,n}$, $n=1,\ldots,N$ such that
\begin{equation*}
	\ol{\Om} = \bigcup_{n=1}^N \ol{\Om}_{1,n} ,
	\qquad
	\varphi \in \ltz(\Om_{1,n}) ,
	\qquad
	n=1,\ldots,N ,
\end{equation*}
where each $\Om_{1,n}$ has width $l_1/N$ in the direction of the $x_1$-coordinate, and
\begin{equation*}
	\forall n\in\{1,\ldots,N\} \qquad
	\diam\Om_{1,n}
	\leq \sqrt{d_{23}^2 + \frac{l_1^2}{N^2}}
\end{equation*}
holds (see Figure \ref{fig:diams}).
For each subdomain we can apply \eqref{eq:cpbound} to obtain
\begin{equation*}
	\forall n\in\{1,\ldots,N\} \qquad
	\norm{\varphi}_{\lt(\Om_{1,n})}
	\leq \frac{\diam\Om_{1,n}}{\pi} \norm{\na\varphi}_{\lt(\Om_{1,n})} ,
\end{equation*}
which implies
\begin{equation*}
	\norm{\varphi}_{\lt(\Om)}
	\leq \frac{1}{\pi} \max_{n\in\{1,\ldots,N\}}\diam\Om_{1,n} \norm{\na\varphi}_{\lt(\Om)}
	\leq \frac{1}{\pi} \sqrt{d_{23}^2 + \frac{l_1^2}{N^2}} \norm{\na\varphi}_{\lt(\Om)}
	\xrightarrow{N\to\infty} \frac{d_{23}}{\pi} \norm{\na\varphi}_{\lt(\Om)} .
\end{equation*}
The cases $i=2$ and $i=3$ are proven in a similar way.
\end{proof}

\begin{figure}
\centering
\tdplotsetmaincoords{55}{105}
\begin{tikzpicture}[scale=0.2,tdplot_main_coords]
	\draw [black,line width=1pt,->] (0,0,0) -- (-4,0,0) node[anchor=west]{$x_2$};
	\draw [black,line width=1pt,->] (0,0,0) -- (0,3,0) node[anchor=north]{$x_1$};
	\draw [black,line width=1pt,->] (0,0,0) -- (0,0,3.5) node[anchor=east]{$x_3$};
\end{tikzpicture}
\begin{tikzpicture}[scale=1.1,tdplot_main_coords]
	\def\len{4}

	\draw [black,line width=0.7pt,dotted] (0,0,0) -- (0,0,1); 
	\draw [black,line width=0.7pt,dotted] (0,\len,0) -- (0,0,0) -- (1,0,0); 

	\draw [black,line width=0.5pt] (1,0,1) -- (0,\len,1);			
	\draw [black,line width=0.5pt] (1,0,1) -- (1,\len,0);			
	\draw [black,line width=0.5pt] (1,\len,0) -- (0,\len,1);		

	\draw [black,line width=1pt] (0,0,1) -- (1,0,1) -- (1,\len,1) -- (0,\len,1) -- cycle;
	
	\draw [black,line width=1pt] (1,0,0) -- (1,\len,0) -- (0,\len,0);
	
	\draw [black,line width=1pt] (1,0,0) -- (1,0,1);
	\draw [black,line width=1pt] (1,\len,0) -- (1,\len,1);
	\draw [black,line width=1pt] (0,\len,0) -- (0,\len,1);

	\draw [black,line width=0.7pt] (1.1,0,0) -- (1.25,0,0) -- (1.25,\len,0) -- (1.1,\len,0);
	
	\draw [black,line width=0.7pt] (-0.1,\len/5-0.1,1) node[anchor=north]{$d_{12}$};
	\draw [black,line width=0.7pt] (0.1,\len/3-0.1,0) node[anchor=north]{$d_{13}$};
	\draw [black,line width=0.7pt] (0,\len+0.35,0.5) node[anchor=north]{$d_{23}$};
	\draw [black,line width=0.7pt] (1.25,\len/2,0) node[anchor=north]{$l_1$};
\end{tikzpicture}
\begin{tikzpicture}[scale=1.1,tdplot_main_coords]
	\def\len{4}

	\draw [black,line width=0.7pt,dotted] (0,0,0) -- (0,0,1); 
	\draw [black,line width=0.7pt,dotted] (0,\len,0) -- (0,0,0) -- (1,0,0); 
	
	\draw [black,line width=0.7pt] (1,\len/4,0) -- (0,\len/2,1);
	
	\draw [black,line width=0.7pt,dotted] (1,\len/4,0) -- (0,\len/4,0) -- (0,\len/4,1);
	\draw [black,line width=0.7pt,dotted] (1,2*\len/4,0) -- (0,2*\len/4,0) -- (0,2*\len/4,1);
	\draw [black,line width=0.7pt,dotted] (1,3*\len/4,0) -- (0,3*\len/4,0) -- (0,3*\len/4,1);
		
	\draw [black,line width=0.7pt] (1,\len/4,0) -- (1,\len/4,1) -- (0,\len/4,1);
	\draw [black,line width=0.7pt] (1,2*\len/4,0) -- (1,2*\len/4,1) -- (0,2*\len/4,1);
	\draw [black,line width=0.7pt] (1,3*\len/4,0) -- (1,3*\len/4,1) -- (0,3*\len/4,1);

	\draw [black,line width=1pt] (0,0,1) -- (1,0,1) -- (1,\len,1) -- (0,\len,1) -- cycle;
	
	\draw [black,line width=1pt] (1,0,0) -- (1,\len,0) -- (0,\len,0);
	
	\draw [black,line width=1pt] (1,0,0) -- (1,0,1);
	\draw [black,line width=1pt] (1,\len,0) -- (1,\len,1);
	\draw [black,line width=1pt] (0,\len,0) -- (0,\len,1);
	
	\draw [black,line width=0.7pt] (1.1,\len/4,0) -- (1.25,\len/4,0) -- (1.25,\len/2,0) -- (1.1,\len/2,0);
	
	\draw [black,line width=0.7pt] (0.1,\len/4-0.55,1) node[anchor=north]{$\Om_{1,1}$};
	\draw [black,line width=0.7pt] (0.1,3*\len/4-0.55,1) node[anchor=north]{$\Om_{1,3}$};
	\draw [black,line width=0.7pt] (0.1,4*\len/4-0.55,1) node[anchor=north]{$\Om_{1,4}$};
	\draw [black,line width=0.7pt] (1.25,\len/3,0) node[anchor=north]{$l_1/4$};
	\draw [black,line width=0.7pt] (0.2,1.7,0.8) .. controls (-0.5,1.4,0.8) .. (-0.3,2,1) node[anchor=west]{$\diam\Om_{1,2}$};
\end{tikzpicture}
\caption{Diameters of a rectangular cuboid $\Om$ and its decomposition into $\Om_1$-sets.}
\label{fig:diams}
\end{figure}
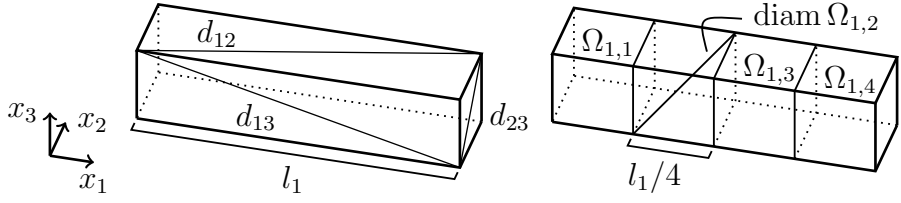

As in \cite{paulymaxconst0,paulymaxconst1,paulymaxconst2}, we will rely on the essential regularity result \cite[Thm. 2.17]{amrouche98}.

\begin{lem} \label{lem:horeg}
Let $\Om$ be convex and
$\phi\in\rc\cap\d$ or
$\phi\in\r\cap\,\dc$. Then $\phi\in\ho$ and
\begin{equation*}
	\normlt{\na\phi}^2 \leq \normlt{\div\phi}^2 + \normlt{\rot\phi}^2 .
\end{equation*}
\end{lem}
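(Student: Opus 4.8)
The plan is to separate the statement into its two assertions: the regularity $\phi\in\ho$, which is the genuine analytic input and which I would take directly from \cite[Thm.~2.17]{amrouche98}, and the norm inequality, which I would then derive by an integration by parts whose boundary term is controlled by convexity.

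First I would reconcile the notation. In the conventions of Section~1 the space $\rc$ carries the homogeneous tangential boundary condition and $\dc$ the homogeneous normal one, so $\rc\cap\d$ and $\r\cap\dc$ are precisely the spaces of $\lt$-fields having $\lt$-divergence and $\lt$-rotation subject to a vanishing tangential, respectively vanishing normal, trace. Since a convex $\Om$ is Lipschitz, \cite[Thm.~2.17]{amrouche98} applies to both spaces and yields $\phi\in\ho$ together with a quantitative bound; this is the step I would not attempt to reprove, as it is exactly the deep regularity result that \cite{paulymaxconst0,paulymaxconst1,paulymaxconst2} invoke.

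For the inequality I would argue by density from smooth fields. For a smooth field $u$ the integrand $|\na u|^2-|\div u|^2-|\rot u|^2$ is a pure divergence of an explicit quadratic expression in $u$ and $\na u$; integrating over $\Om$ and applying the divergence theorem converts it into a boundary integral. A direct computation (the classical Gaffney--Friedrichs identity) shows that, as soon as either the tangential or the normal trace of $u$ vanishes, this boundary integral is governed by the second fundamental form of $\p\Om$ evaluated on the trace of $u$. The decisive point is the sign: for convex $\Om$ this form is sign-definite, so the boundary term has the sign forcing $\normlt{\na u}^2\le\normlt{\div u}^2+\normlt{\rot u}^2$. Once $\phi\in\ho$ is known, both sides are continuous in the relevant graph norm, so density of the appropriate smooth fields extends the inequality to every $\phi$ in the two spaces.

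The main obstacle is that the boundary-term sign analysis is transparent only when $\p\Om$ is smooth, whereas a convex domain is merely Lipschitz with possibly nonsmooth boundary; moreover an interior exhaustion by smooth domains does not preserve the boundary conditions, so the limiting procedure is delicate. I would resolve this either by approximating $\Om$ by smooth convex domains, on which the identity and the favorable sign are immediate, and passing carefully to the limit, or, more economically, by simply quoting that \cite[Thm.~2.17]{amrouche98} already furnishes the estimate with constant one for convex $\Om$. In either case the real difficulty is packaged inside the cited regularity theorem, and the remaining work is bookkeeping of the two boundary conditions.
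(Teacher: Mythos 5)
Your proposal is correct and, in its ``more economical'' variant, coincides exactly with what the paper does: Lemma \ref{lem:horeg} is stated without proof and is simply the quoted regularity result \cite[Thm.~2.17]{amrouche98}, exactly as in \cite{paulymaxconst0,paulymaxconst1,paulymaxconst2}. Your additional sketch of the Gaffney--Friedrichs identity with the convexity-controlled boundary term is the standard argument behind that cited theorem, but the paper does not reproduce it.
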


We can now state the improved bound.

\begin{thm} \label{thm:cm}
Let $\Om$ be convex. Then we have the estimate
\begin{equation*}
	\cm=\cmm \leq \max\{\cpw{1},\cpw{2},\cpw{3}\}
	\leq \frac{\max\{d_{23},d_{13},d_{12}\}}{\pi} .
\end{equation*}
\end{thm}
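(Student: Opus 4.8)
The plan is to reduce the Maxwell estimate to the scalar Poincar\'e-type inequality of Lemma \ref{lem:cpscalar}, applied separately to each component. Since Lemma \ref{eq:cmeq} already gives $\cm=\cmm$, it suffices to bound a single Maxwell constant, and I would work with $\cmm$, because Corollary \ref{cor:D} supplies local zero mean information precisely for fields in $\dcz$. Thus I would fix an arbitrary $\phi\in\r\cap\,\dcz$ and aim to establish $\normlt{\phi}\leq\max\{\cpw{1},\cpw{2},\cpw{3}\}\,\normlt{\rot\phi}$.

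First I would obtain scalar $\ho$-regularity of the components. Because $\phi\in\r\cap\,\dcz\subset\r\cap\,\dc$ and $\Om$ is convex, Lemma \ref{lem:horeg} yields $\phi\in\ho$ together with $\normlt{\na\phi}^2\leq\normlt{\div\phi}^2+\normlt{\rot\phi}^2=\normlt{\rot\phi}^2$, the last equality using $\div\phi=0$. In particular each component $\phi_i$ is a scalar function in $\ho(\Om)$, so Lemma \ref{lem:cpscalar} becomes applicable to it once its zero mean hypothesis is verified.

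That hypothesis is exactly what Corollary \ref{cor:D} provides: since $\phi\in\dcz$, we have $\phi_i\in\ltz(\Om_i)$ for every $\Om_i$-set, matching the ``for an arbitrary $\Om_i$'' requirement of Lemma \ref{lem:cpscalar}. Hence for each $i\in\{1,2,3\}$ we get $\norm{\phi_i}_{\lt(\Om)}\leq\cpw{i}\,\norm{\na\phi_i}_{\lt(\Om)}$. Writing $C:=\max\{\cpw{1},\cpw{2},\cpw{3}\}$, squaring, and summing over $i$ gives
\begin{align*}
	\normlt{\phi}^2 & = \sum_{i=1}^3 \norm{\phi_i}_{\lt(\Om)}^2
	\leq C^2 \sum_{i=1}^3 \norm{\na\phi_i}_{\lt(\Om)}^2 \\
	& = C^2 \normlt{\na\phi}^2
	\leq C^2 \normlt{\rot\phi}^2 ,
\end{align*}
where the final inequality is the regularity estimate from the previous step. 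This proves $\cmm\leq C$, and with $\cm=\cmm$ the first asserted inequality follows. The second inequality is then immediate, since Lemma \ref{lem:cpscalar} also records $\cpw{i}\leq d_{jk}/\pi$ for $(i,j,k)\in\{1,2,3\}_3$; thus $\cpw{1}\leq d_{23}/\pi$, $\cpw{2}\leq d_{13}/\pi$, and $\cpw{3}\leq d_{12}/\pi$, so that taking the maximum yields $C\leq\max\{d_{23},d_{13},d_{12}\}/\pi$.

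The computation itself is routine once the inputs are aligned; the step I would check most carefully is the compatibility of the two structural hypotheses. One must confirm that belonging to $\r\cap\,\dcz$ is simultaneously strong enough to trigger the $\ho$-regularity of Lemma \ref{lem:horeg} (which needs $\phi\in\r\cap\,\dc$) and the componentwise zero mean of Corollary \ref{cor:D} (which needs $\phi\in\dcz$), and that the resulting scalar estimates recombine without loss, i.e.\ that $\sum_{i}\norm{\na\phi_i}_{\lt(\Om)}^2$ genuinely equals $\normlt{\na\phi}^2$ under the Frobenius convention used in Lemma \ref{lem:horeg}. Notably, no indirect compactness argument enters, which is the whole point of the refinement over \eqref{eq:cm}.
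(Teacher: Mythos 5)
Your proposal is correct and follows essentially the same route as the paper: fix $\phi\in\r\cap\,\dcz$, invoke Lemma \ref{lem:horeg} for $\ho$-regularity and the bound $\normlt{\na\phi}\leq\normlt{\rot\phi}$, use Corollary \ref{cor:D} to verify the zero mean hypothesis of Lemma \ref{lem:cpscalar} componentwise, sum the squared scalar estimates, and conclude via \eqref{eq:max2} and Lemma \ref{eq:cmeq}. The only cosmetic difference is that you spell out the intermediate justifications (e.g.\ $\div\phi=0$ and the indexing $\cpw{i}\leq d_{jk}/\pi$) slightly more explicitly than the paper does.
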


\begin{proof}
Let $\phi\in\r\cap\,\dcz$. Then $\phi \in \ho$ by Lemma \ref{lem:horeg} and $\phi \in \ltz$ by Corollary \ref{cor:D}. More specifically, Corollary \ref{cor:D} shows that the specialized Poincar\'e inequality of Lemma \ref{lem:cpscalar} can be applied to each component of $\phi$, and we directly get
\begin{align*}
	\normlt{\phi}^2
	& = \normlt{\phi_1}^2 + \normlt{\phi_2}^2 + \normlt{\phi_3}^2
		\leq \cpw{1}^2\normlt{\na\phi_1}^2 + \cpw{2}^2\normlt{\na\phi_2}^2 + \cpw{3}^2\normlt{\na\phi_3}^2 \\
	& \leq \max\{\cpw{1}^2,\cpw{2}^2,\cpw{3}^2\} \normlt{\na\phi}^2
		\leq \max\{\cpw{1}^2,\cpw{2}^2,\cpw{3}^2\} \normlt{\rot\phi}^2 ,
\end{align*}
where in the last step we used Lemma \ref{lem:horeg}. In view of \eqref{eq:max2} we obtain $\cmm\leq\max\{\cpw{1},\cpw{2},\cpw{3}\}$. Together with Lemmas \ref{eq:cmeq} and \ref{lem:cpscalar} we have the assertion.
\end{proof}

\begin{rem}
If we had used in the above proof the global zero mean property \eqref{eq:Dglobal} and the Payne-Weinberger estimate \eqref{eq:cpbound} (instead of Corollary \ref{cor:D} and Lemma \ref{lem:cpscalar}, respectively), we would have arrived at \eqref{eq:cm}. Note that Pauly's proof of \eqref{eq:cm} does not use knowledge of \eqref{eq:Dglobal}, but is rather based on finding suitable potential functions.
\end{rem}

\begin{exam}
\mbox{}
\begin{itemize}
\item[\bf(i)] Let $\Om=(0,1)^3$. Then $d=\sqrt{3}$ and $d_{23}=d_{13}=d_{12}=\sqrt{2}$. The bounds of \eqref{eq:cm} and Theorem \ref{thm:cm} then give
\begin{equation*}
	\cm=\cmm\leq\frac{\sqrt{3}}{\pi} , \qquad \cm=\cmm\leq\frac{\sqrt{2}}{\pi} ,
\end{equation*}
respectively.
\item[\bf(ii)] Let $\Om=B(0,1)$, i.e., the unit ball in $\reals^3$. Then $d=d_{23}=d_{13}=d_{12}=2$, and the bound in Theorem \ref{thm:cm} offers no improvement over the bound \eqref{eq:cm}.
\end{itemize}
\end{exam}

\begin{rem}
In \cite{paulymaxconst1} it was proven that for convex domains $\Om$ the two Maxwell constants in the inequalities
\begin{align*}
	& \forall \phi\in\rc\cap\d
	& & \normlt{\phi}^2 \leq \cmt^2 \brac{\normlt{\div\phi}^2+\normlt{\rot\phi}^2} , \\
	& \forall \phi\in\r\cap\,\dc
	& & \normlt{\phi}^2 \leq \cmn^2 \brac{\normlt{\div\phi}^2+\normlt{\rot\phi}^2} ,
\end{align*}
satisfy $\cmt \leq \cmn = \cp$, and it was conjectured that $\cmt<\cmn$ holds. By using Theorem \ref{thm:cm} instead of \cite[Lemma 4]{paulymaxconst1} in the proof of  \cite[Theorem 6]{paulymaxconst1}, we obtain
\begin{align*}
	& \forall \phi\in\rc\cap\d
	& & \normlt{\phi}^2 \leq \cf^2 \normlt{\div\phi}^2 + \max\{\cpw{1},\cpw{2},\cpw{3}\}^2 \normlt{\rot\phi}^2 , \\
	& \forall \phi\in\r\cap\,\dc
	& & \normlt{\phi}^2 \leq \cp^2 \brac{\normlt{\div\phi}^2+\normlt{\rot\phi}^2} ,
\end{align*}
where $\cf$ is the constant in the Friedrichs' inequality $\normlt{\varphi} \leq \cf \normlt{\na\varphi}$ which holds for all scalar valued functions $\varphi\in\hoc$. It is well known that $\cf<\cp$ (see, e.g., \cite{filonov05}). Thus, if one can prove that $\max\{\cpw{1},\cpw{2},\cpw{3}\}<\cp$, then the conjecture $\cmt<\cmn$ follows.

Note also that weighted $\lt$-orthogonal Helmholtz decompositions were used in \cite{paulymaxconst1}. In this note unweighted decompositions were used only for simplicity.
\end{rem}


\bibliographystyle{plain} 
\bibliography{biblio}

\end{document}